\def\nor{\trianglelefteq}
\def\ord#1{\vert #1 \vert}
\def\ZZ{\mathbb{Z}}
\def\Z#1{\textrm{Z}(#1)}
\newtheorem*{thm}{Theorem}
\newtheorem{prop}{Proposition}[section]
\newtheorem{cor}[prop]{Corollary}
\newtheorem{lem}[prop]{Lemma}
\theoremstyle{definition}
\newtheorem*{cnstr}{Construction}
\def\CD#1{\mathcal{C}\mathcal{D} (#1)}
\begin{document}

\author{Ben Brewster}
\address{Department of Mathematical Sciences, Binghamton University, Binghamton, New York, United States} 
\email{ben@math.binghamton.edu}

\author{Peter Hauck}
\address{Fachbereich Informatik, Eberhard-Karls-Universit\"at T\"ubingen, T\"ubingen, Germany}
\email{hauck@informatik.uni-tuebingen.de}

\author{Elizabeth Wilcox}
\address{Mathematics Department, Colgate University, Hamilton, New York 13346}
\email{ewilcox@colgate.edu}

\title{Groups whose Chermak-Delgado Lattice is a Chain}

\begin{abstract} For a finite group $G$ with subgroup $H$, the {\it Chermak-Delgado measure of $H$ in $G$} 
refers to $\ord H \ord {C_G(H)}$.  The set of all subgroups with maximal Chermak-Delgado measure form a sublattice, 
$\CD G$, within the subgroup lattice of $G$.  This paper examines conditions under which the Chermak-Delgado 
lattice is a chain of subgroups $H_0 < H_1 \cdots < H_n$.  
On the basis of a general result how to extend certain Chermak-Delgado lattices, we construct for any prime $p$ and 
any non-negative integer $n$ a $p$-group whose Chermak-Delgado lattice is a chain of length $n$.  \end{abstract}

\maketitle

I. Martin Isaacs \cite{Isaacs} examined a function from the set of subgroups of a finite group into the set of 
positive integers, one of a family of such functions originally defined by A. Chermak and A. Delgado \cite{cd} as a means 
to obtain results on the existence of large normal abelian subgroups.  
Isaacs referred to this function as the {\it Chermak-Delgado measure} and revisited the proof that, for a finite group $G$, 
the subgroups with maximal measure form a (self-dual) sublattice within the subgroup lattice of $G$, which Isaacs 
called the {\it Chermak-Delgado lattice} of $G$.  

It was shown in \cite{BW2012} that the Chermak-Delgado lattice of a direct product of finite groups decomposes 
as the direct product of the Chermak-Delgado lattices of the factors.  Thus, at least for finite nilpotent groups, 
the study of the Chermak-Delgado lattice rests on understanding this lattice in finite $p$-groups.  
Already in small non-abelian $p$-groups, the Chermak-Delgado lattice usually has a rather complicated structure. 
We therefore ask whether more elementary types of  self-dual lattices, like chains, can be realized as Chermak-Delgado lattices 
of $p$-groups. Clearly, the Chermak-Delgado 
lattice of any finite abelian group is a chain of length 0. We prove that for any prime $p$ and any positive 
integer $n$ there exists a $p$-group of class 2 whose Chermak-Delgado lattice is a chain of length $n$.

We obtain this result by first constructing $p$-groups (of smallest possible order) with Chermak-Delgado 
lattice a chain of length 1 or 2. The general statement then follows from a construction which embeds 
certain $p$-groups $H$ of class 2 in $p$-groups $G$ of class 2 such that the Chermak-Delgado lattice 
of $G$ extends the Chermak-Delgado lattice of $H$ by a new minimal and a new maximal element.

\section{Preliminaries}\label{prelim}

Let $G$ be a finite group.  If $H \leq G$ then the {\it Chermak-Delgado measure of $H$ (in $G$)} 
is denoted by $m_G(H)$, and defined as 
\begin{equation*}
m_G(H) = \ord H \ord {C_G(H)}.
\end{equation*}
Define
\begin{equation*}
m(G) = \textrm{max}\{m_G(H) \mid H \leq G \} \qquad \textrm{and} \qquad \CD G = \{ H \mid m_G(H) = m(G)\}.
\end{equation*}
The set $\CD G$ is the Chermak-Delgado lattice of $G$.  The fact that $\CD G$ is a sublattice of the subgroup 
lattice of $G$ was determined in \cite{cd}.  We refer the reader to \cite[Section 1G]{Isaacs} for an introduction to the 
topic as well as the proofs of the following facts:	

\begin{prop}\label{cdomnibus} Let $G$ be a finite group.  Then $\CD G$ is a sublattice in the subgroup lattice 
of $G$ with the following properties:
	\begin{enumerate}
	\item $\langle H, K \rangle = HK$ for $H, K \in \CD G$.
	\item If $H \in \CD G$, then $C_G(H) \in \CD G$ and $C_G(C_G(H)) = H$.
	\item The maximum subgroup in $\CD G$ is characteristic in $G$.
	\item The  minimum subgroup of $\CD G$ is characteristic, abelian, and contains $\Z G$.
	\end{enumerate}\end{prop}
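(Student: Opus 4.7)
The plan rests on first establishing the classical Chermak--Delgado inequality: for any subgroups $H, K \leq G$,
\[
m_G(H) \cdot m_G(K) \leq m_G(\langle H, K \rangle) \cdot m_G(H \cap K).
\]
This inequality is the technical core from which every part of the proposition will follow. The key ingredients are the identity $C_G(\langle H, K \rangle) = C_G(H) \cap C_G(K)$ and the inclusion $C_G(H) \cdot C_G(K) \subseteq C_G(H \cap K)$. I would combine these with the order formula $\ord{AB} = \ord{A}\ord{B}/\ord{A \cap B}$ applied once to the pair $H, K$ and once to the pair $C_G(H), C_G(K)$, together with the obvious bound $\ord{\langle H, K \rangle} \geq \ord{HK}$, then regroup the four resulting factors to obtain the inequality.

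Part (1) will follow by specialising to $H, K \in \CD{G}$: the left-hand side becomes $m(G)^2$, while both factors on the right are at most $m(G)$, so equality is forced throughout. This single step simultaneously delivers the sublattice property ($\langle H, K \rangle$ and $H \cap K$ both lie in $\CD{G}$) and, from the now saturated inequality $\ord{\langle H, K \rangle} \geq \ord{HK}$, the identity $\langle H, K \rangle = HK$.

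Parts (2)--(4) I would handle as short corollaries. For (2), the containment $H \leq C_G(C_G(H))$ forces $m_G(C_G(H)) \geq m_G(H) = m(G)$, and maximality then gives both $C_G(H) \in \CD{G}$ and $C_G(C_G(H)) = H$. For (3) and the characteristicity half of (4), I would use that $m_G$ is invariant under $\mathrm{Aut}(G)$, so $\CD{G}$ is stable under automorphisms; uniqueness of the maximum (as the join of all lattice elements) and of the minimum (as the meet) makes each characteristic. That the minimum $L$ is abelian follows from $L \cap C_G(L) \in \CD{G}$ together with minimality forcing $L \cap C_G(L) = L$, hence $L \leq C_G(L)$. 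For $\Z{G} \leq L$, I would observe that $L \cdot \Z{G}$ is a subgroup with centraliser $C_G(L)$, so $m_G(L \cdot \Z{G}) \geq m(G)$; maximality then collapses $\ord{L \cdot \Z{G}} = \ord{L}$, whence $\Z{G} \leq L$.

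The main obstacle will be the Chermak--Delgado inequality itself: one has to juggle four subgroups and four centralisers, invoke the product formula twice, and keep track of which inclusions become equalities in the maximal case. Once that inequality is in hand, the rest of the proposition is essentially a routine consequence of maximality or minimality in $\CD{G}$ combined with the already-proved part (1).
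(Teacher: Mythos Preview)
Your proposal is correct and is essentially the standard argument found in Isaacs' text. Note, however, that the paper does not supply its own proof of this proposition: it explicitly refers the reader to \cite[Section~1G]{Isaacs} for the proofs, so there is no in-paper argument to compare against beyond observing that your outline matches the cited source.
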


\begin{prop}\label{maxmember} Let $G$ be a finite group.  If $M$ is the maximal member in $\CD G$ then $\CD G = \CD M$.\end{prop}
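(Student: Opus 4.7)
The plan is to show both inclusions $\CD G \subseteq \CD M$ and $\CD M \subseteq \CD G$ by exploiting the crucial fact that, for any $H \in \CD G$, both $H$ and $C_G(H)$ lie inside $M$. The first is immediate because $M$ is the maximum of $\CD G$; the second follows from Proposition \ref{cdomnibus}(2), which tells us $C_G(H) \in \CD G$, hence $C_G(H) \leq M$ as well. This containment has the decisive consequence that $C_G(H) = C_G(H) \cap M = C_M(H)$, so the Chermak-Delgado measure of $H$ computed in $G$ coincides with the one computed in $M$.

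First I would establish that $m(M) = m(G)$. Applying the observation above to $H = M$ itself gives $C_G(M) \leq M$, hence $C_G(M) = \Z M$, and therefore $m_M(M) = |M||\Z M| = m_G(M) = m(G)$, which forces $m(M) \geq m(G)$. For the reverse inequality, for any $K \leq M$ one has $C_M(K) \leq C_G(K)$, so $m_M(K) \leq m_G(K) \leq m(G)$, and taking the max over $K \leq M$ yields $m(M) \leq m(G)$.

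Next, for $H \in \CD G$, the remark in the first paragraph gives $m_M(H) = |H||C_M(H)| = |H||C_G(H)| = m_G(H) = m(G) = m(M)$, so $H \in \CD M$. Conversely, suppose $K \in \CD M$. Then $m_G(K) = |K||C_G(K)| \geq |K||C_M(K)| = m_M(K) = m(M) = m(G)$, and since $m(G)$ is the maximum value of $m_G$, equality must hold throughout, placing $K$ in $\CD G$.

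I do not anticipate a serious obstacle: the whole argument is really a careful bookkeeping of the observation that the maximum of $\CD G$ absorbs both arguments of the measure function for every subgroup in $\CD G$. The only point that requires a moment's care is the verification that $m(M) \geq m(G)$, since a priori $M$ might look ``too small'' to realize the measure $m(G)$ inside itself; but Proposition \ref{cdomnibus}(2) applied to $M$ itself, which pins $C_G(M)$ down to $\Z M$, closes this gap cleanly.
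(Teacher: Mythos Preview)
Your proof is correct and follows exactly the approach the paper indicates: the paper only sketches the argument by noting it ``rel[ies] on the fact that if $H \in \CD G$ then $C_G(H) = C_M(H)$,'' and your proposal simply fills in the routine details around this observation.
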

  
\noindent
Proposition~\ref{maxmember} is straightforward to prove, relying on the fact that if $H \in \CD G$ then $C_G(H) = C_M(H)$.  We shall use these statements frequently throughout the paper without any further reference.\\

For a non-negative integer $n$, we call a series of subgroups $G_0 < \ldots < G_n$ of a group $G$ a {\it chain of length $n$}.  Examples of groups $G$ where $\CD G$ is a chain of length 0 or 1 abound -- $G$ abelian or $G = S_n$ for $n \geq 4$, respectively -- but examples with longer chains take some work to produce.  Moreover, the readily-available examples $G$ where $\CD G$ 
is a chain of length 1 have composite order.  We therefore restrict ourselves in this paper to $p$-groups whose 
Chermak-Delgado lattice is a chain.\\

The following simple lemma is needed frequently in Section~\ref{short} when we determine the structure of $p$-groups of small order whose Chermak-Delgado lattice is a chain of length 1 or 2.

\begin{lem}\label{centr} Let $p$ be a prime. Suppose $P$ is a $p$-group with normal subgroups $R$, $Q$ such that $\Z P < R \leq Q$ and $\ord{R/\Z P} = p$. If $\ord P > \ord Q\ord{[R,P]}$ then there exists $x \in P \setminus Q$ such that $\ord{C_P(x)} \ge p^2\ord{\Z P}$.
\end{lem}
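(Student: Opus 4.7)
\medskip

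\noindent\textbf{Proof plan.} The plan is to exhibit the required element $x$ inside the centralizer $C_P(R)$ and then use $R$ itself to pad the centralizer of $x$. The first observation I would make is that $[R,P] \leq \Z P$. Indeed, since $R \trianglelefteq P$ and $|R/\Z P| = p$, conjugation defines a homomorphism from the $p$-group $P$ into $\mathrm{Aut}(R/\Z P) \cong \mathbb{Z}/(p-1)\mathbb{Z}$; this homomorphism must be trivial, so $R/\Z P$ lies in the center of $P/\Z P$, which is exactly the statement $[R,P] \leq \Z P$.

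Next I would fix any $r \in R \setminus \Z P$, so that $R = \langle r \rangle \Z P$. Because $[R,P] \leq \Z P$, the commutator map
\begin{equation*}
\varphi \colon P \to \Z P, \qquad \varphi(x) = [r,x]
\end{equation*}
is a group homomorphism. Its kernel is $C_P(r)$, which coincides with $C_P(R)$ since $R = \langle r \rangle \Z P$; its image is $[r,P] = [R,P]$ (the latter because $[\Z P,P]=1$). Therefore
\begin{equation*}
\ord{C_P(R)} = \frac{\ord P}{\ord{[R,P]}} > \ord Q,
\end{equation*}
where the strict inequality is exactly our hypothesis $\ord P > \ord Q\ord{[R,P]}$.

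From this cardinality comparison I pick any $x \in C_P(R) \setminus Q$; such an $x$ exists because $C_P(R)$ is strictly larger than $Q$, and certainly $x \notin Q$ implies $x \notin R$ (since $R \leq Q$). Then $R$ and $x$ both centralize $x$, so $\langle R, x \rangle \leq C_P(x)$, and because $x \notin R$ we have $|\langle R,x\rangle : R| \geq p$, yielding
\begin{equation*}
\ord{C_P(x)} \geq \ord{\langle R,x\rangle} \geq p \ord R = p^2 \ord{\Z P},
\end{equation*}
as required.

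I do not anticipate a real obstacle here; the only slightly delicate point is justifying that the commutator map is a homomorphism, which is precisely what the reduction $[R,P] \leq \Z P$ buys us. Everything else is a counting argument and the observation that $R$ itself witnesses the lower bound $p \ord R$ on $|C_P(x)|$ once $x$ is chosen to centralize $R$.
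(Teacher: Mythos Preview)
Your proof is correct and follows essentially the same approach as the paper: both reduce to finding some $x \in C_P(R) \setminus Q$ via the commutator map $g \mapsto [r,g]$ (with $r \in R \setminus \Z P$), and then bound $\ord{C_P(x)}$ below by $\ord{\langle x\rangle R}$. The only cosmetic difference is that the paper phrases the existence of such an $x$ as a pigeonhole argument on coset representatives of $Q$ (two representatives must give the same commutator since $[P:Q] > \ord{[R,P]}$), whereas you invoke the first isomorphism theorem directly to get $\ord{C_P(R)} = \ord P/\ord{[R,P]} > \ord Q$; these are two packagings of the same computation.
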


\begin{proof} Note that $[R,P] \le \Z P$. Let $x_1,\ldots,x_n$ be coset representatives of $Q$ in $P$. 
Let $y \in R \setminus \Z P$. By hypothesis, 
$n > \ord{[R,P]}$. Hence there exist $i$, $j$ with $i \ne j$ such that $[y,x_i] = [y,x_j]$, whence $y \in C_P(x_ix_j^{-1})$. Since 
$x:= x_ix_j^{-1} \not\in Q$, it follows that $\ord{C_P(x)} \ge \ord{\langle x\rangle R} \ge p^2\ord{\Z P}$.
\end{proof}

\section{Chains of Length 1 or 2}\label{short}

Let $p$ be a prime. The aim of this section is to show that there are non-abelian groups $P$ of order $p^6$ with 
$\CD P = \{\Z P, P\}$; in fact, for odd $p$ there are even groups of order $p^5$ with this property, but not for 
$p = 2$. No $p$-groups of smaller order have a chain of length 1 as Chermak-Delgado lattice. Similarly, 
chains of length 2 occur as Chermak-Delgado lattices among groups of order $p^7$, and for odd $p$ even 
for certain groups of order $p^6$ but not for $p=2$ or $p$-groups of smaller order.

\begin{prop}\label{l1} Let $p$ be a prime and $P$ a $p$-group of order at most $p^5$. Then $\CD P$ is a chain of 
length 1 if and only if  $p \ne 2$ and $\ord P = p^5$, $\ord{\Z P} = p^2$, $P/\Z P$ is extraspecial of exponent $p$ 
and $[Z_2(P),P] = \Z P$.
\end{prop}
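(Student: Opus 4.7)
The plan has two halves, with the necessity direction doing most of the work.

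\emph{Sufficiency.} Assume the four hypotheses. Then $m_P(Z(P)) = m_P(P) = p^2 \cdot p^5 = p^7$, so both lie in $\CD P$. It remains to show $m_P(H) < p^7$ for every $H$ with $Z(P) < H < P$. The key centralizer computation is $|C_P(h)| = p^3$ for every $h \notin Z(P)$: when $h \in Z_2(P) \setminus Z(P)$ this is the surjectivity of $y \mapsto [h,y]\colon P \to Z(P)$, which is exactly $[Z_2(P),P] = Z(P)$; when $h \notin Z_2(P)$, the image $\bar h$ is non-central in the extraspecial quotient $P/Z(P)$, so $|C_{P/Z(P)}(\bar h)| = p^2$, and a short calculation using surjectivity of $[c,\cdot]$ for $c \in Z_2(P) \setminus Z(P)$ drops the centralizer order to $p^3$. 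Every maximal subgroup of $P$ contains the Frattini subgroup $\Phi(P) = Z_2(P)$ (here exponent $p$ is used) and has centralizer $Z(P)$. Combining these bounds gives $m_P(H) \le p^6$ for every intermediate $H$.

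\emph{Necessity.} Let $\CD P = \{A, M\}$. By Propositions~\ref{cdomnibus} and~\ref{maxmember}, $A = Z(M) \supseteq Z(P)$, $M = C_P(A)$, and $\CD M = \CD P$. I first rule out $|P| \le p^4$: orders $p, p^2$ are abelian; the extraspecial group of order $p^3$ has every maximal subgroup in $\CD P$; in order $p^4$, a case split on $|Z(P)| \in \{p, p^2\}$ produces a third non-comparable element of $\CD P$ (typically $Z_2(P)$ or some $\langle h, Z(P)\rangle$) via the homomorphism $y \mapsto [x,y]\colon P \to Z(P)$ for $x \in Z_2(P)\setminus Z(P)$. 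Applied to $M$, this forces $M = P$, $A = Z(P)$, $|P| = p^5$, and $m(P) = |Z(P)|\cdot p^5$. Next I rule out $|Z(P)| \in \{p, p^3, p^4\}$: $p^4$ is abelian; for $|Z(P)| = p^3$, the non-degenerate alternating commutator form on the $2$-dimensional $P/Z(P)$ makes every non-central $h$ self-centralizing in $\langle h, Z(P)\rangle$ of measure $p^8 = m(P)$, producing $p+1$ extra elements; for $|Z(P)| = p$, any $x \in Z_2(P) \setminus Z(P)$ satisfies $|C_P(x)| \ge |P|/|Z(P)| = p^4$ (since $[x, P] \le Z(P)$), so $m_P(\langle x, Z(P)\rangle) \ge p^2 \cdot p^4 = p^6 = m(P)$ gives a third element.

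Finally, with $|P| = p^5$, $|Z(P)| = p^2$ and $m(P) = p^7$, I establish the remaining structure. If $P$ had class $2$, the commutator descends to an alternating bilinear form $\omega \colon (\ZZ/p)^3 \times (\ZZ/p)^3 \to (\ZZ/p)^2$; the three structure constants $\omega(e_i,e_j)$ lie in a $2$-dimensional target, hence satisfy a non-trivial linear relation, which forces the three $2\times 2$ minors of the matrix of $\omega(\bar h, \cdot)$ to share a common linear factor $L(\bar h)$. The hyperplane $\{L = 0\}$ is the set of ``bad'' $\bar h$ with $\operatorname{rk}\omega(\bar h, \cdot) \le 1$; for such $\bar h$, $|C_P(h)| \ge p^4$ and $\langle h, Z(P)\rangle$ attains $m(P) = p^7$, a forbidden third element of $\CD P$. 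So $P$ has class $3$, $P/Z(P)$ is non-abelian of order $p^3$ (hence extraspecial), and $|Z_2(P)| = p^3$. Exponent $p^2$ for $P/Z(P)$ is excluded directly: an element $\bar a$ of order $p^2$ would give $|\langle a, Z(P)\rangle| = p^4$ and $|C_P(a)| \ge p^4$, so $m_P(\langle a, Z(P)\rangle) \ge p^8 > p^7 = m(P)$, impossible. This simultaneously forces $p \ne 2$, since both extraspecial $2$-groups of order $8$ have exponent $4$. Lastly, if $[Z_2(P), P] < Z(P)$, then $|[c,P]| \le p$ for $c \in Z_2(P)\setminus Z(P)$, so $|C_P(c)| \ge p^4$ and $m_P(Z_2(P)) \ge p^3 \cdot p^4 = p^7 = m(P)$, making $Z_2(P) \in \CD P$ a third element; hence $[Z_2(P), P] = Z(P)$. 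The main obstacle is the class-$2$ elimination above --- the syzygy/minor analysis verifying that the ``bad'' locus is always a non-trivial hyperplane, which is the one step of the argument that is not immediate from the centralizer-via-commutator-homomorphism bookkeeping used everywhere else.
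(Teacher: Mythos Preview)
Your argument is correct, and both directions reach the same conclusions as the paper, but the necessity half is organized quite differently.

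The paper begins by proving a single centralizer statement (*): $C_P(x)=\langle x\rangle\Z P$ of order $p\,\ord{\Z P}$ for every $x\notin\Z P$, deduced from the non-existence of an abelian subgroup of order $p^2\ord{\Z P}$. Everything afterwards---ruling out small orders, forcing $\ord{\Z P}=p^2$, obtaining exponent $p$ for $P/\Z P$, and finally $[Z_2(P),P]=\Z P$---is read off from (*), together with two applications of the pigeonhole Lemma~\ref{centr}. For the step excluding $P/\Z P$ elementary abelian (equivalently, class~$2$), the paper invokes an external result, Huppert \cite[III, Satz 13.7]{Huppert}, to produce an $x$ with $\ord{[x,P]}=p$, contradicting (*). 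Your proof instead handles each small case by a direct $y\mapsto[x,y]$ computation (bypassing Lemma~\ref{centr} entirely), and replaces the Huppert citation with the alternating-form argument. That argument is sound: since $\omega$ is alternating, $\sum_j h_j\,\omega(\bar h,e_j)=0$, which forces the three $2\times 2$ minors of $M(\bar h)$ to be $h_1L$, $-h_2L$, $h_3L$ for a common linear form $L$; hence the rank-$\le 1$ locus is $\{L=0\}$ (or all of $V$ when $L\equiv 0$, which happens precisely when $\ord{P'}\le p$), and in either case contains a non-zero $\bar h$. Note that this factorization is really a consequence of alternating-ness rather than of the linear relation among the $\omega(e_i,e_j)$ you cite; the relation only identifies the coefficients of $L$. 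An equivalent one-line version: compose $\omega$ with any non-zero functional $\ell\colon W\to\ZZ_p$; the resulting alternating form on the $3$-dimensional $V$ has non-trivial radical, and any non-zero radical vector is ``bad''. Your route has the advantage of being self-contained (no appeal to \cite{Huppert}), while the paper's (*)-first organization makes the remaining steps shorter and more uniform. Two small points of presentation: your class-$2$ argument silently uses that $P/\Z P$ is elementary abelian and that the target of $\omega$ is $(\ZZ/p)^2$; both follow from your step-5 measure bound $m_P(\langle a,\Z P\rangle)\ge p^8$ applied first, so the logical order should be reversed. Also, in step~6 you need $Z_2(P)=\langle c\rangle\Z P$ (which holds since $\ord{Z_2(P)/\Z P}=p$) to pass from $\ord{C_P(c)}\ge p^4$ to $\ord{C_P(Z_2(P))}\ge p^4$.
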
 

\begin{proof} Assume that $\CD P$ is a chain of length 1. Clearly $P$ is non-abelian and we may assume that $\CD P = \{\Z P, P\}$. We show first for all $x \in P / \Z P$ that 
\begin{equation}
C_P(x) = \langle x \rangle \Z P  \quad \textrm{and} \quad \ord{C_P(x)} = p \cdot \ord{\Z P}\tag{*}.
\end{equation} 
Notice $P$ does not contain an abelian subgroup $D$ of order $p^2 \cdot \ord{\Z P}$; otherwise 
$m_P(D) \geq p^4 \cdot \ord{\Z P}^2 \geq p^5 \cdot \ord{\Z P} = m(P)$, contradicting $\CD P = \{\Z P, P\}$. 
In particular, $\ord{\langle x \rangle \Z P} = p \cdot \ord{\Z P}$ for $x \in P \setminus \Z P$.
If $C_P(x) > \langle x \rangle \Z P$ then $C_P(x)$ contains an abelian subgroup $D$ 
of order $p^2 \cdot \ord{\Z P}$ since $\langle x \rangle \Z P \le \Z{C_P(x)}$. This contradiction implies (*).

It is clear that $\ord P > p^3$ since in an extraspecial group of order $p^3$ the $p+1$ maximal abelian subgroups are 
in $\CD P$. Moreover, application of Lemma~\ref{centr} with an arbitrary normal subgroup $Q = R \le Z_2(P)$ shows that the assumption $\ord{\Z P} = p$ leads to a contradiction against (*). Hence $\ord{\Z P} > p$.

If $P$ has order $p^4$ then $\ord{\Z P} = p^2$; therefore $P$ has an abelian subgroup $A$ of order $p^3$ whence 
$m_P(A) \ge p^6 = m(P)$. It follows that $\ord P = p^5$. 
If $\ord{\Z P} = p^3$ then $m(P) = p^8 = m_P(A)$ for any subgroup $A$ of order $p^4$ containing $\Z P$, contradicting the 
assumption that $\CD P = \{\Z P, P\}$. Hence $\ord{\Z P} = p^2$. 

By (*), $P/ \Z P$ has exponent $p$. If $P/\Z P$ is elementary abelian,  let $Z$ be a subgroup of order $p$ in $\Z P$. By \cite[III, Satz 13.7]{Huppert}, $\Z{P/Z} > \Z P/Z$. Hence 
there exists $x \in P \setminus \Z P$ such that $[x,P] = Z$. Then, contrary to (*), $\ord{C_P(x)} = p^4$ since $g \mapsto [x,g]$ is a homomorphism from $P$ onto $Z$ with $C_P(x)$ as its kernel.

Thus $P/\Z P$ is not elementary abelian. Therefore, $p \ne 2$ and $P/\Z P$ is extraspecial of exponent $p$.  
If $[Z_2(P),P]$ has order $p$, then Lemma~\ref{centr} 
(with $Q = R = Z_2(P)$) contradicts (*). Thus $[Z_2(P),P] =\Z P$.\\

Assume now that $p \ne 2$, $\ord P = p^5$, $\ord{\Z P} = p^2$, $P/\Z P$ is extraspecial of exponent $p$ 
and $[Z_2(P),P] = \Z P$. Let $\langle t\rangle \Z P = Z_2(P)$ and choose $x_1,x_2 \in P \setminus Z_2(P)$ such that 
$[x_1,x_2]\Z P = t\Z P$. Note that $[x_i,t]^p = [x_i^p,t] = 1$ for $i = 1,2$. Therefore 
$[Z_2(P),P] = \Z P$ implies that $\Z P = \langle[x_1,t]\rangle \times \langle [x_2,t]\rangle$, hence $\Z P$ is elementary abelian of order $p^2$. 
From this it follows immediately that $C_P(x) = \langle x\rangle \Z P$ for all $x \in P \setminus \Z P$. Consequently, 
every subgroup of order $p^3$ containing $\Z P$ is self-centralizing and the centralizer of every subgroup of order 
$p^4$ is $\Z P$. It follows that for each such subgroup the Chermak-Delgado measure is $p^6$ whereas 
$m_P(\Z P) = m_P(P) = p^7$. Hence $\CD P = \{\Z P, P\}$ as claimed.
\end{proof}

\begin{cor}\label{l1odd} Let $p$ be an odd prime. Then there exists a non-abelian $p$-group $P$ of order $p^5$ with 
$\CD P = \{\Z P, P\}$.
\end{cor}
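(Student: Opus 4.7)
The plan is to exhibit a concrete non-abelian $p$-group $P$ of order $p^5$ satisfying the four structural conditions listed in Proposition~\ref{l1} -- namely, that $\ord{\Z P} = p^2$, that $P/\Z P$ is extraspecial of exponent $p$, and that $[Z_2(P), P] = \Z P$ -- and then apply the converse direction of that proposition.

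For the construction I would take $P$ to be the relatively free group of nilpotency class $3$ and exponent $p$ on two generators $a, b$. Writing $c = [a, b]$, $d = [c, a]$, $e = [c, b]$, standard facts about free nilpotent groups (accessible via the collection process, or via the Lazard correspondence with the free nilpotent Lie ring of class $3$ over $\mathbb{F}_p$) give $\ord P = p^5$, with lower central series $P > \gamma_2(P) = \langle c, d, e\rangle > \gamma_3(P) = \langle d, e\rangle > 1$ and factor orders $p^2, p, p^2$. The hypotheses of Proposition~\ref{l1} are then routine to verify: since $[c, a] = d \ne 1$, the element $c$ is not central, so $\Z P = \gamma_3(P)$ has order $p^2$; the quotient $P/\Z P$ is the Heisenberg group of order $p^3$ and exponent $p$, hence extraspecial (this is where $p$ odd is essential); consequently $Z_2(P) = \gamma_2(P)$, and $[Z_2(P), P] = [\gamma_2(P), P] = \gamma_3(P) = \Z P$. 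Proposition~\ref{l1} now yields $\CD P = \{\Z P, P\}$ as required.

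The main obstacle is justifying that $\ord P = p^5$ -- equivalently, that imposing both class $3$ and exponent $p$ does not collapse $\gamma_3(P)$ further, so that $d$ and $e$ remain $\mathbb{F}_p$-linearly independent. This is a classical fact about free nilpotent groups, but for a self-contained alternative one can construct $P$ explicitly as a central extension of the Heisenberg group of order $p^3$ and exponent $p$ by an elementary abelian group of order $p^2$ using an appropriately chosen $2$-cocycle, and verify the center and commutator structure by direct calculation.
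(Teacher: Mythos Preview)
Your construction and the paper's are essentially the same group for $p \ge 5$: the paper presents by generators $x_1,x_2$ with $t=[x_1,x_2]$, $z_i=[t,x_i]$ exactly the group you describe. However, your formulation has a genuine gap at $p=3$.

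The relatively free group of nilpotency class $3$ and exponent $3$ on two generators is simply the Burnside group $B(2,3)$, which has order $3^3$ and class~$2$; it is a standard fact that every $2$-generator group of exponent~$3$ is nilpotent of class at most~$2$. So for $p=3$ your $P$ collapses to the Heisenberg group and $\gamma_3(P)=1$, contradicting the claimed $\ord P = p^5$. The Lazard correspondence you invoke is exactly where this shows up: it requires $p>c$, and here $c=3$. Your parenthetical ``this is where $p$ odd is essential'' therefore misidentifies the obstruction.

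The paper sidesteps this by \emph{not} imposing exponent~$p$ globally: it only requires the five polycyclic generators $x_1,x_2,t,z_1,z_2$ to have order~$p$, and then verifies directly (via the identity $(x_1t)^p = x_1^p t^p z_1^{\binom{p}{2}}$, which needs only $p$ odd) that the semidirect products assemble to a group of order~$p^5$. For $p=3$ the resulting group has exponent~$9$, but it still satisfies all the hypotheses of Proposition~\ref{l1}. To repair your argument, either replace ``relatively free of exponent $p$'' by ``the quotient of the free class-$3$ group on two generators by the $p$-th powers of a Mal'cev basis'' (equivalently, by $\gamma_i(F)^p$ for $i=1,2,3$), or carry out explicitly the cocycle construction you mention at the end.
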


\begin{proof} Let $P$ be generated by $x_1,x_2,t,z_1,z_2$ according to the following defining relations:
\begin{equation*}
x_1^p = x_2^p = t^p = z_1^p = z_2^p = 1,\end{equation*}
\begin{equation*}
[x_1,x_2] = t, [t,x_1] = z_1, [t,x_2] = z_2,
\end{equation*}
\begin{center} all other commutators between the generators equal 1.\end{center}
Clearly $t, z_1, z_2$ generate an elementary abelian group of order $p^3$ and $x_1$ induces an automorphism 
of order $p$ on this group. We claim that the relations for $x_2$ define an automorphism of order $p$ on 
$\langle x_1,t,z_1,z_2\rangle$. The relation $x_1^p = 1$ is preserved under the action of $x_2$ since 
$(x_1t)^p = x_1^pt^pz_1^{\binom{p}{2}} = 1$ as $p$ is odd. All other relations in $\langle x_1,t,z_1,z_2\rangle$ 
are trivially preserved. That $x_2$ has actually order $p$ follows essentially from the fact that 
$x_1^{x_2^p} = x_1t^pz_2^{\binom{p}{2}} = x_1$, again because $p$ is odd.

Hence $\ord P = p^5$, $\Z P = \langle z_1,z_2\rangle = \langle [t,P]\rangle$ has order $p^2$ and $P/\Z P$ 
is extraspecial of exponent $p$. The assertion follows now from Proposition~\ref{l1}. 
\end{proof}

We show now that there is a group of order $2^6$ whose Chermak-Delgado lattice is a chain of length 1. The 
corresponding construction is possible for any $p$ (and we present it this way) and leads to groups $P$ 
with $P/\Z P$ elementary abelian (in contrast to the $p$-groups of order $p^5$ for odd $p$ with chains of 
length 1 as Chermak-Delgado lattices; cf. Proposition~\ref{l1}). This property is in fact needed later 
when we apply the extension theorem (Theorem~\ref{extthm}) to obtain $p$-groups with Chermak-Delgado 
lattices being chains of arbitrary length.

\begin{prop}\label{l1n} Let $p$ be a prime. There exists a group $P$ of order $p^6$ such that $\Z P$ and 
$P/\Z P$ are elementary abelian of order $p^3$ and $\CD P = \{\Z P, P\}$.
\end{prop}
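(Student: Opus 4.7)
The plan is to realize $P$ as a central extension of elementary abelian groups so that the induced commutator form $P/\Z P \times P/\Z P \to \Z P$ is identified with the wedge product $\wedge^2 \mathbb{F}_p^3 \to \mathbb{F}_p^3$; a dimension count makes this an isomorphism. Nondegeneracy of the wedge then forces every noncentral element of $P$ to have centralizer of order exactly $p^4$, pushing the Chermak-Delgado measure of every proper intermediate subgroup strictly below $m_P(\Z P) = m_P(P) = p^9$.

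For the construction, I would define $P$ on the set $\mathbb{F}_p^3 \times \mathbb{F}_p^3$ with multiplication
\[
(v, z)(v', z') \;=\; (v + v',\ z + z' + \phi(v, v')),
\]
where $\phi\bigl((a_1, a_2, a_3), (b_1, b_2, b_3)\bigr) = (a_1 b_2,\ a_1 b_3,\ a_2 b_3)$. Bilinearity of $\phi$ immediately gives the cocycle identity and hence associativity, so $P$ is a group of order $p^6$, and this works uniformly in $p$ (including $p = 2$, where imposing exponent $p$ on generators would be problematic as in Corollary~\ref{l1odd}). Direct computation gives $\Z P = \{0\} \times \mathbb{F}_p^3$ elementary abelian of order $p^3$, $P/\Z P \cong \mathbb{F}_p^3$ elementary abelian of order $p^3$, and
\[
[(v, 0), (v', 0)] \;=\; \bigl(0,\ \phi(v, v') - \phi(v', v)\bigr),
\]
whose second component is the coordinate vector of $v \wedge v'$ in the ordered basis $e_1\wedge e_2,\, e_1\wedge e_3,\, e_2\wedge e_3$ of $\wedge^2 \mathbb{F}_p^3$.

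For the centralizer step, fix $x \in P \setminus \Z P$ with image $\bar x \in P/\Z P$; the map $\bar y \mapsto \bar x \wedge \bar y$ from $\mathbb{F}_p^3$ to $\mathbb{F}_p^3$ has kernel $\mathbb{F}_p \bar x$, since two vectors in $\mathbb{F}_p^3$ wedge to zero iff they are linearly dependent. Hence $C_P(x) = \langle x\rangle \Z P$ has order $p^4$. By Proposition~\ref{cdomnibus}(4) every element of $\CD P$ contains $\Z P$, so I only need to examine subgroups $H$ with $\Z P \le H \le P$. One has $m_P(\Z P) = m_P(P) = p^9$. If $\ord{H/\Z P} = p$, then $H = \langle x\rangle \Z P = C_P(x)$, giving $m_P(H) = p^4 \cdot p^4 = p^8$. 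If $\ord{H/\Z P} = p^2$, pick $\bar x, \bar y$ spanning $H/\Z P$; then $C_P(H)/\Z P = \langle \bar x\rangle \cap \langle \bar y\rangle = 0$, so $C_P(H) = \Z P$ and $m_P(H) = p^5 \cdot p^3 = p^8$. Thus $\CD P = \{\Z P, P\}$.

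The main obstacle is arranging the construction so that it works uniformly across all primes; once the cocycle $\phi$ is in place and the commutator form is identified with the wedge product, the centralizer and measure computations reduce to elementary linear algebra over $\mathbb{F}_p$.
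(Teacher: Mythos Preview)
Your proof is correct and is essentially the paper's own argument, just presented via a cocycle rather than by generators and relations: the paper takes generators $x_1,x_2,x_3,z_{1,2},z_{1,3},z_{2,3}$, all of order $p$, with $[x_i,x_j]=z_{i,j}$ and all other commutators trivial, which is exactly your group (with $x_i=(e_i,0)$ and $z_{i,j}=(0,e_i\wedge e_j)$), and then observes that $C_P(x)=\langle x\rangle\Z P$ for every $x\notin\Z P$ and concludes $m_P(U)=p^8<p^9$ for all $\Z P<U<P$. Your identification of the commutator form with the wedge product on $\mathbb{F}_p^3$ makes the centralizer computation more transparent and the uniformity in $p$ (including $p=2$) explicit, but the underlying group and the measure comparison are identical.
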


\begin{proof} Let $P$ be generated by $x_1,x_2,x_3,z_{1,2},z_{1,3},z_{2,3}$ subject to the following defining relations:
\begin{equation*}
x_1^p = x_2^p = x_3^p = z_{1,2}^p = z_{1,3}^p = z_{2,3}^p = 1,
\end{equation*}
\begin{equation*}
[x_1,x_2] = z_{1,2}, [x_1,x_3] = z_{1,3}, [x_2,x_3] = z_{2,3},
\end{equation*}
\begin{center} all other commutators between the generators equal 1.\end{center}
It is clear that $\ord P = p^6$, $\Z P = \langle z_{1,2}, z_{1,3}, z_{2,3}\rangle$, $m_P(P) = m_P(\Z P) = p^9$. 
It is also straightforward to verify that $C_P(x) = \langle x\rangle \Z P$ for all $x \in P \setminus \Z P$. 
This implies that $m_P(U) = p^8$ for all $\Z P < U < P$ and the assertion follows.
\end{proof}

We now turn to $p$-groups whose Chermak-Delgado lattice is a chain of length 2.

\begin{prop}\label{l2} Let $p$ be a prime and $P$ be a $p$-group of order at most $p^6$. Then $\CD P$ is a chain of 
length 2 if and only if  $p \ne 2$ and $\ord P = p^6$, $\ord{\Z P} = p^2$, $\ord{P'} = p^3$, there exists an abelian 
normal subgroup $A$ of $P$, $\ord A = p^4$ and $[A,x] = \Z P$ for all $x \in P \setminus A$.

In this case, $\CD P = \{\Z P, A, P\}$. 
\end{prop}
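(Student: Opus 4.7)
The strategy parallels the proof of Proposition \ref{l1}: I would first derive tight structural constraints from the chain hypothesis, then verify the converse by direct computation of Chermak-Delgado measures. Suppose $\CD P = \{H_0 < H_1 < H_2\}$. By Proposition \ref{maxmember} I may replace $P$ by $H_2$ to ensure $H_2 = P$. Self-duality of $\CD P$ (Proposition \ref{cdomnibus}(2)) applied to the chain then forces $H_0 = C_P(P) = \Z P$ and $C_P(H_1) = H_1$, so writing $A := H_1$ I obtain that $A$ is abelian and self-centralizing. Since $A$ is the unique element of its order in the conjugation-invariant lattice $\CD P$, $A \nor P$. The identity $m(P) = \ord P \ord{\Z P} = \ord A \ord{C_P(A)} = \ord A^2$ yields the basic constraint $\ord A^2 = \ord P \ord{\Z P}$.

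To pin down the orders, let $\ord P = p^n$, $\ord A = p^a$, $\ord{\Z P} = p^z$ with $z < a < n \le 6$ and $2a = z+n$. The candidate triples are $(1,2,3)$, $(2,3,4)$, $(1,3,5)$, $(3,4,5)$, $(2,4,6)$, $(4,5,6)$. The three triples with $\ord{P/\Z P} = p^2$ --- namely $(2,3,4)$, $(3,4,5)$, $(4,5,6)$ --- all fail because the preimages in $P$ of the $p+1$ subgroups of order $p$ in $P/\Z P$ are abelian maximal subgroups of the form $\langle x\rangle \Z P$, each attaining $m(P)$; thus $\CD P$ has at least $p+3$ elements and cannot be a chain. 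The triple $(1,2,3)$ (extraspecial of order $p^3$) fails analogously. For $(1,3,5)$ I would apply Lemma \ref{centr} with $R$ of order $p^2$ inside $Z_2(P)$ and $Q = R$; the element $x$ thereby produced with centralizer of order at least $p^3$ forces either a subgroup outside the chain $\{\Z P, A, P\}$ to attain $m(P)$, or else contradicts $C_P(A) = A$. Only $(\ord{\Z P}, \ord A, \ord P) = (p^2, p^4, p^6)$ remains.

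With the orders fixed, the key structural step is to show $[A,x] = \Z P$ for all $x \in P \setminus A$. First, the homomorphism $a \mapsto [a,x]$ on $A$ has kernel $C_A(x) \ge \Z P$; if $C_A(x) > \Z P$ so that $\ord{C_A(x)} \ge p^3$, then $C_P(C_A(x)) \supseteq \langle A, x\rangle$ has order at least $p^5$, giving $m_P(C_A(x)) \ge p^3 \cdot p^5 = p^8 = m(P)$, hence $C_A(x) \in \CD P$ strictly between $\Z P$ and $A$, contradicting the chain. So $C_A(x) = \Z P$ and $\ord{[A,x]} = p^2$. To upgrade this to $[A,x] = \Z P$ I would show $A \le Z_2(P)$: using $P' \le A$ (from $P/A$ abelian of order $p^2$) together with the $P$-normality of each $[A,x]$ (obtained from $x^g \in xA$ and standard commutator identities), I would prove $[A,P] = \Z P$, after which $[A,x] \le \Z P$ and $\ord{[A,x]} = p^2 = \ord{\Z P}$ force equality. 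Then $\ord{P'} = p^3$ follows from a commutator calculation in $P/\Z P$ via the bilinear pairing $P/A \times A/\Z P \to \Z P$, and the exclusion $p \ne 2$ is obtained by a squaring argument on $P/\Z P$ in the spirit of Proposition \ref{l1}.

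For the converse, assuming the listed conditions, I verify $m_P(H) < p^8$ for every $H \notin \{\Z P, A, P\}$. If $\Z P < H < A$ then $C_P(H) = A$, since any $x \in P \setminus A$ centralizing $H$ would force $H \le C_A(x) = \Z P$; hence $m_P(H) = \ord H \cdot p^4 < p^8$. If $A < H < P$ then $C_P(H) \le C_P(A) = A$ and for any $x \in H \setminus A$ we have $C_P(H) \cap A \le C_A(x) = \Z P$, so $C_P(H) = \Z P$ and $m_P(H) = \ord H \cdot p^2 < p^8$. Subgroups $H$ not comparable with $A$ are controlled by intersecting with $A$ and applying the commutator condition to $H \cap A$. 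I expect the main obstacle throughout to be the transition from $C_A(x) = \Z P$ to $[A,x] = \Z P$ in the forward direction; this requires the $P$-normality analysis of $[A,x]$ and dovetails with pinning down $\ord{P'} = p^3$ and ruling out $p = 2$.
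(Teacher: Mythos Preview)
Your overall architecture matches the paper's: reduce to $\CD P = \{\Z P,A,P\}$ with $A$ abelian self-centralizing, pin down the orders, prove $[A,x] = \Z P$, then deduce $p\ne 2$ and $\ord{P'}=p^3$; finally check the converse. Two points deserve attention.

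First, a small technical slip. In the $(z,a,n)=(1,3,5)$ elimination you invoke Lemma~\ref{centr} with $Q=R$. That only produces $x\notin R$, and if $x\in A\setminus R$ the conclusion $\ord{C_P(x)}\ge p^3$ carries no information (it just says $C_P(x)\ge A$). The paper takes $R$ of order $p^2$ inside $A\cap Z_2(P)$ and $Q=A$; then the lemma yields $x\in P\setminus A$ and the abelian subgroup $\langle x\rangle R$ of order $\ge p^3=\ord A$ distinct from $A$ gives the contradiction.

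Second, and this is the substantive gap you yourself flag: the passage from $\ord{[A,x]}=p^2$ to $[A,x]=\Z P$. Your proposed route---normality of $[A,x]$ via $P'\le A$---is correct as far as it goes (it does force $[A,x]\le Z_2(P)\cap A$), but it does not by itself exclude the possibility that $Z_2(P)\cap A$ has order $p^3$ with $[A,x]\ne\Z P$ sitting inside it. The paper's argument is different and worth knowing. It proceeds in two stages. Stage one is a pigeonhole count: if some $a\in A$ had $[a,x]\notin\Z P$ for \emph{every} $x\in P\setminus A$, then the $p^2$ cosets $[a,x_i]\Z P$ (over coset representatives $x_i$ of $A$ in $P$) would be pairwise distinct and hence exhaust $A/\Z P$; but each $[a,x_i]$ lies in $[A,P]\le Z_2(P)$, forcing $[A,P]\le\Z P$ and contradicting the assumption. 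So for every $a$ there is some $x$ with $[a,x]\in\Z P$. Stage two upgrades ``some $x$'' to ``all $y$'' via the Witt identity: from $[x,y]\in A$ and $[a,x]\in\Z P$ one gets that $[a,y]$ centralizes $x$, hence $[a,y]\in C_A(x)=\Z P$. This gives $[A,P]\le\Z P$, and then your order count $\ord{[A,x]}=p^2$ finishes the identification $[A,x]=\Z P$.

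Your converse sketch is essentially right but omits one step the paper does carefully: for $x\in P\setminus A$ one must exclude $\ord{C_P(x)}=p^4$. If that held, $C_P(x)$ would be abelian with $P=C_P(x)A$, whence $P'=[C_P(x),A]=\Z P$, contradicting the hypothesis $\ord{P'}=p^3$. Once $\ord{C_P(x)}=p^3$ is in hand, your case analysis for subgroups $H\ge\Z P$ goes through.
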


\begin{proof} Suppose $\CD P$ is a chain of length 2: we may assume by Proposition~\ref{maxmember} there exists a non-central subgroup $A < P$ such that $\CD P = \{\Z P, A, P\}$. Since $C_P(A)$ and all conjugates 
of $A$ are in $\CD P$, $A$ is an abelian self-centralizing normal subgroup of $P$. In particular, 
$m(P) = \ord{\Z P}\ord P = \ord A^2$ and every abelian subgroup distinct from $A$ has order less 
than $\ord A$. 

Suppose that $\ord P \le p^5$. As in the proof of Proposition~\ref{l1} we may assume that $\ord P = p^5$ and 
$P \in \CD P$. Since $m(P)$ is a square, $\ord{\Z P}$ equals $p$ or $p^3$. In the latter case, $P/\Z P$ is elementary abelian 
of order $p^2$, whence there are $p+1$ abelian subgroups of order $p^4 = \ord A$ containing $\Z P$. This contradiction implies $\ord{\Z P} = p$. If $R$ is a normal subgroup of $P$ of order $p^2$ contained in $A \cap Z_2(P)$ then 
Lemma~\ref{centr} (with $Q = A$) yields an element $x \in P \setminus A$ whose centralizer has order at least $p^3$. Hence $P$ has an abelian subgroup containing $\langle x\rangle \Z P$ of order at least $p^3 = \ord A$, contrary to the conclusion in the first paragraph.

Therefore $\ord P = p^6$. Since $\ord{\Z P} = p^4$ is clearly 
impossible, it follows from $m(P) = \ord A^2$ that $\ord{\Z P} = p^2$ and $\ord A = p^4$. From the conclusions of the first paragraph in the proof, we deduce that $C_P(x) = \langle x\rangle\Z P$ has order $p^3$ for all 
$x \in P \setminus A$. In particular, every element in $P/\Z P \setminus A/\Z P$ has order $p$.

We show next that $[A,P] \le \Z P$. Suppose there exists $a \in A$ such that $[a,x] \not\in \Z P$ for 
all $x \in P \setminus A$. Let $x_1,\ldots,x_{p^2}$ be coset representatives of $A$ in $P$. If there 
exist $i$, $j$ with $i \ne j$ such that $[a,x_i]\Z P = [a,x_j]\Z P$, then $[a,x_ix_j^{-1}] \in \Z P$ (note that 
$[A,P] \le Z_2(P)$). Since $x_ix_j^{-1} \not\in A$, this contradicts our assumption. It follows 
that  $\{ [a,x_i] \mid i = 1,\ldots,p^2\}$ constitutes a complete set of coset representatives for $\Z P$ in 
$A$. But then $[A,P] = \bigcup_{i=1}^{p^2} [[a,x_i],P] \le \Z P$, contradicting our assumption.

Consequently for every $a \in A$ there exists $x \in P \setminus A$ such that $[a,x] \in \Z P$. Given 
any $y \in P \setminus A$ it follows from $[x,y] \in A$ and the Witt Identity that $[a,y]$ centralizes $x$. 
As we have noted before, $C_A(x) = \Z P$, whence $[a,y] \in \Z P$. Therefore, $[A,P] \le \Z P$.

Suppose there is some $x \in P \setminus A$ such that $\ord{[A,x]} = p$. Considering the homomorphism 
$a \mapsto [a,x]$ for $a \in A$ leads to $\ord{C_A(x)} = p^3$, a contradiction. Hence $[A,x] = \Z P$ for 
all $x \in P \setminus A$.

Let $x \in P \setminus A$ and $a \in A$. Since $x^p \in \Z P$ and $[a,x] \in \Z P$, it follows 
that $1 = [x^p,a] = [x,a^p]$ whence $a^p \in C_A(x) = \Z P$. Therefore $A$ is elementary abelian 
and hence $P/\Z P$ has exponent $p$.

Suppose that $P/\Z P$ is elementary abelian. If $x \in P \setminus A$ then $\langle x\rangle \Z P$ is 
normal in $P$. It follows from Lemma~\ref{centr} (with $Q = R = \langle x\rangle \Z P$) that 
$C_P(x)$ has order at least $p^4$. This contradiction implies $P/\Z P$ cannot be elementary abelian and, since it 
has exponent $p$, therefore $p$ is odd.

Choose $x,y \in P$ such that $P = A\langle x, y\rangle$. Since $P' \le A \le Z_2(P)$, it follows that 
$[\langle x \rangle, \langle y \rangle]\Z P = \langle [x,y]\rangle\Z P$. As $[A,P] = \Z P$, this implies 
that $P' = \langle [x,y]\rangle\Z P$. Hence $\ord{P'} = p^3$ as $P/ \Z P$ is non-abelian of exponent $p$. 
(We note that $\ord{P'} = p^3$ can also be inferred from \cite[Lemma 14]{PS}).  

This proves one implication of the proposition.\\

For the converse assume that $P$ is a group with the properties listed in the proposition; we show 
$\CD P = \{\Z P, A, P\}$.

Let $x \in P \setminus A$. Since $[A,x] = \Z P$, it follows that $C_A(x)$ has order $p^2$ whence $C_A(x) = \Z P$. 
Now $\ord{C_P(x)} = p^4$ would imply that $C_P(x)$ is abelian and $P = C_P(x)A$, but then $P' = [C_P(x),A] = \Z P$ 
as $[A,P] = [A,x] = \Z P$ by hypothesis. This contradicts $\ord{P'} = p^3$. Hence $C_P(x) = \langle x\rangle\Z P$ and 
$x^p \in \Z P$ for all $x \in P \setminus A$. Also, 
any $a \in A \setminus \Z P$ is not centralized by an element outside $A$ whence $C_P(a) = C_P(A) = A$. 
This implies that the centralizer of a subgroup $B$ of order $p^3$ has order $p^3$ if $B$ is not contained in $A$ 
and order $p^4$ otherwise. If $B \ne A$ has order $p^4$ then it follows immediately (by considering the cases 
$\ord{A \cap B} = p^2$ or $p^3$) that $C_P(B) = \Z P$; thus the same holds for subgroups of order $p^5$. 

It follows that $m_P(B) < p^8 = m_P(\Z P) = m_P(A) = m_P(P)$ for all subgroups $B \ge \Z P$ different from $\Z P$, $A$, or $P$. 
This proves the second direction of the proposition.
\end{proof}

We remark that some statements of Propositions~\ref{l1} and \ref{l2} could have been deduced from \cite{PS} but this would 
not have shortened the straightforward proofs given here.

\begin{cor}\label{l2odd} Let $p$ be an odd prime. Then there exists a $p$-group $P$ of order $p^6$ whose 
Chermak-Delgado lattice is a chain $\Z P < A < P$ of length 2.
\end{cor}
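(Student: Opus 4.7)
The plan is to construct an explicit $p$-group $P$ of order $p^6$ satisfying the structural conditions of Proposition~\ref{l2}, from which the conclusion is immediate. Since $p$ is odd, fix a non-square $d \in \mathbb{F}_p^*$. Let $P$ be generated by $x_1, x_2, s, t, z_1, z_2$, each of order $p$, with $z_1, z_2$ central, the elements $s, t, z_1, z_2$ commuting pairwise, and
\begin{equation*}
[x_1,x_2] = s,\quad [s,x_1] = z_1,\quad [s,x_2] = z_2,\quad [t,x_1] = z_2,\quad [t,x_2] = z_1^d,
\end{equation*}
all other commutators between generators being trivial.

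First I would verify that this presentation defines a group of order $p^6$ by realizing $P$ as an iterated semidirect product, following the pattern of Corollary~\ref{l1odd}. Take $A := \langle s, t, z_1, z_2 \rangle$ elementary abelian of order $p^4$. The prescribed action of $x_1$ on $A$ is an automorphism of order $p$ (its matrix is unitriangular on an elementary abelian group), yielding $A\langle x_1\rangle$ of order $p^5$. Next, define $\sigma_2$ on $A\langle x_1\rangle$ by $\sigma_2(z_i) = z_i$, $\sigma_2(s) = sz_2$, $\sigma_2(t) = tz_1^d$, $\sigma_2(x_1) = x_1 s$. A routine check confirms $\sigma_2$ preserves the defining relations; for the order, induction gives $\sigma_2^k(x_1) = x_1 s^k z_2^{\binom{k}{2}}$, whence $\sigma_2^p(x_1) = x_1$ because $s^p = 1$ and $\binom{p}{2} \equiv 0 \pmod p$ for odd $p$. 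I expect this step to be the main technical obstacle, since it is precisely where the hypothesis of odd $p$ is essential.

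To verify the structural conditions of Proposition~\ref{l2}, observe that $\Z P = \langle z_1, z_2\rangle$: for $g = s^\gamma t^\delta z_1^\epsilon z_2^\zeta \in A$ one computes $[g,x_1] = z_1^\gamma z_2^\delta$ and $[g,x_2] = z_1^{d\delta} z_2^\gamma$, which both vanish iff $\gamma = \delta = 0$, while elements outside $A$ cannot centralize $x_1$. Then $P' = \langle s, z_1, z_2 \rangle$ has order $p^3$, and $A$ is abelian normal of order $p^4$ because $[A,P] \subseteq \Z P \subseteq A$.

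The key structural condition $[A,x] = \Z P$ for all $x \in P \setminus A$ is verified last. Writing $x = x_1^\alpha x_2^\beta a$ with $(\alpha,\beta) \not\equiv (0,0) \pmod p$ and $a \in A$, bilinearity of commutation (valid since $[A,P] \le \Z P$ is central) gives $[s,x] = z_1^\alpha z_2^\beta$ and $[t,x] = z_1^{d\beta} z_2^\alpha$. These two elements span $\Z P$ precisely when the matrix $\left(\begin{smallmatrix} \alpha & d\beta \\ \beta & \alpha \end{smallmatrix}\right)$ is invertible, and its determinant $\alpha^2 - d\beta^2$ is nonzero exactly because $d$ is a non-square. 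All hypotheses of Proposition~\ref{l2} then hold, so $\CD P = \{\Z P, A, P\}$ is a chain of length 2.
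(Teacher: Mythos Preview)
Your argument is correct and mirrors the paper's: build an explicit group of order $p^6$ as an iterated semidirect product and verify the hypotheses of Proposition~\ref{l2}, the key step being that $[A,x]=\Z P$ reduces to the anisotropy of a binary quadratic form over $\mathbb{F}_p$ (you use $\alpha^2-d\beta^2$ with $d$ a non-square, while the paper uses $i^2+ij-cj^2$ with $c$ not of the form $m^2+m$, which is the same device in disguise since it amounts to $1+4c$ being a non-square). One small slip: the claim that ``elements outside $A$ cannot centralize $x_1$'' is false (take $x_1$ itself), but your own computation $[s,x]=z_1^{\alpha}z_2^{\beta}\neq 1$ for $(\alpha,\beta)\not\equiv(0,0)$ already shows that no element outside $A$ centralizes $s$, so $\Z P\subseteq A$ and the conclusion is unaffected.
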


\begin{proof} Since the map $f: \ZZ_p \longrightarrow \ZZ_p$ defined by $f(m) = m(m+1)\bmod p$ is not 
surjective, there exists $c \in \ZZ_p$ not contained in the image of $f$. 

Let $P$ be generated by $x_1,x_2,a_1, a_2,z_1,z_2$ according to the following defining relations:
\begin{equation*}
x_1^p = x_2^p = a_1^p = a_2^p = z_1^p = z_2^p = 1,
\end{equation*}
\begin{equation*}
[x_1,x_2] = a_1, [a_1,x_1] = z_1, [a_1,x_2] = z_1z_2^c, [a_2,x_1] = z_2, [a_2,x_2] = z_1,
\end{equation*}
\begin{center} all other commutators between the generators equal 1.\end{center}
It is clear that $a_1, a_2, z_1, z_2$ generate an elementary abelian group $A$ of order $p^4$ and 
that $x_1$ induces an automorphism of order $p$ on $A$. We claim that the relations for $x_2$ define an automorphism of order $p$ on 
$\langle x_1,a_1,a_2,z_1,z_2\rangle$. The relation $x_1^p = 1$ is preserved under the action of $x_2$ since 
$(x_1a_1)^p = x_1^pa_1^pz_1^{\binom{p}{2}} = 1$ as $p$ is odd. All other relations in $\langle x_1,a_1,a_2,z_1,z_2\rangle$ 
are trivially preserved. That $x_2$ has actually order $p$ follows essentially from the fact that 
$x_1^{x_2^p} = x_1a_1^p(z_1z_2^c)^{\binom{p}{2}} = x_1$, again because $p$ is odd.

Thus $P$ has order $p^6$, $A$ is an abelian normal subgroup of order $p^4$, $P' = \langle a_1, z_1, z_2\rangle$ has 
order $p^3$ and clearly $\Z P = \langle z_1,z_2\rangle$ has order $p^2$. In light of Proposition~\ref{l2} it 
remains to be shown that $[A,x] = \Z P$ for all $x \in P \setminus A$. We may assume that $x = x_1^ix_2^j$ for $i, j$ with
$0 \le i,j \le p-1$ and $i,j$ not both equal to 0. Since 
\begin{equation*}
[a_1,x] = z_1^{i+j}z_2^{cj} \qquad \textrm{and} \qquad [a_2,x] = z_1^jz_2^i,
\end{equation*}
$[a_1,x]$ and $[a_2,x]$ do not generate $\Z P$ if and only if the determinant of the $\ZZ_p$-matrix
\begin{equation*}
\left( \begin{array}{cc}
							i + j & cj\\
							j & i\\
				\end{array}
		\right)
\end{equation*} 
equals 0, that is $i^2 + ij -cj^2 = 0$ in $\ZZ_p$. Suppose this is the case. If $j=0$ then $i=0$. Hence $j \ne 0$ whence 
$c = (ij^{-1})^2 + ij^{-1}$ in $\ZZ_p$ which is impossible by the definition of $c$. Hence $[A,x] = \Z P$ and $\CD P$ is a chain of length 2 by Proposition~\ref{l2}.
\end{proof}    

As with chains of length 1, for the construction in Section~\ref{extend} we also need $p$-groups $P$ whose 
Chermak-Delgado lattice is a chain of length 2 such that  
$P/\Z P$ is elementary abelian. We show now that for every prime $p$ there exist groups of order $p^7$ with this property. 
In particular, $2^7$ is the smallest order of a 2-group with a chain of length 2 as Chermak-Delgado lattice.

\begin{prop}\label{l2n} Let $p$ be a prime. There exists a group $P$ of order $p^7$, $\Z P$ and 
$P/\Z P$ both elementary abelian, whose 
Chermak-Delgado lattice is a chain $\Z P < A < P$ of length 2.
\end{prop}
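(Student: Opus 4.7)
The plan is to exhibit $P$ explicitly by generators and relations, in the spirit of Proposition~\ref{l1n} and Corollary~\ref{l2odd}. The natural target sizes are $\ord P = p^7$, $\ord{\Z P} = p^3$, and $\ord A = p^5$, chosen so that $\ord{\Z P}\cdot\ord P = \ord A^2 = p^{10}$ becomes the common Chermak-Delgado measure of the three intended members $\Z P$, $A$, $P$.

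Choose $c \in \ZZ_p$ such that the binary quadratic form $Q(i,j) := i^2 - ij - cj^2$ is anisotropic over $\ZZ_p$; such $c$ exists for every prime (take $c = 1$ for $p = 2$; for odd $p$, any $c$ with $1 + 4c$ a non-square, in the spirit of Corollary~\ref{l2odd}). Let $P$ be generated by $a_1, a_2, x_1, x_2, z_1, z_2, z_3$, each of order $p$, with $z_1, z_2, z_3$ central, $a_1a_2 = a_2a_1$, and commutator relations $[a_1,x_1] = z_1$, $[a_1,x_2] = z_2$, $[a_2,x_1] = z_1z_2$, $[a_2,x_2] = z_1^c$, $[x_1,x_2] = z_3$. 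A normal-form argument as in Proposition~\ref{l1n} shows this presents a group of order $p^7$ and nilpotency class $2$ with $\Z P = \langle z_1,z_2,z_3\rangle$ and $P/\Z P$ both elementary abelian; setting $A := \langle a_1,a_2,\Z P\rangle$ gives an abelian normal subgroup of order $p^5$. The key self-centralizing calculation $C_A(x) = \Z P$ for $x \in P\setminus A$ is as follows: writing $x \equiv x_1^ix_2^j \pmod{A}$ with $(i,j)\not\equiv 0 \pmod p$, one computes $[a_1,x] = z_1^iz_2^j$ and $[a_2,x] = z_1^{i+cj}z_2^i$, which are linearly independent in $\Z P$ exactly because their ``determinant'' is $Q(i,j) \ne 0$. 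Hence $m_P(\Z P) = m_P(A) = m_P(P) = p^{10}$.

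The main verification, and the main obstacle, is to show that no other subgroup $B$ with $\Z P \le B \le P$ attains measure $p^{10}$. Since $P$ is class $2$ with $P/\Z P$ elementary abelian, the commutator descends to an alternating bilinear form $\beta : V \times V \to \Z P$ with $V := P/\Z P$; for $W = B/\Z P$ one has $C_P(B)/\Z P = W^\perp$, so $m_P(B) = p^{6+\dim W+\dim W^\perp}$, and $m_P(B) = p^{10}$ is equivalent to $\dim W + \dim W^\perp = 4$. A dimension-by-dimension analysis rules this out for $W\notin\{0,A/\Z P,V\}$: anisotropy of $Q$ handles $\dim W = 1$ (the map $\beta(\cdot,\bar u) : V \to \Z P$ has rank at least $2$) and $\dim W = 2$ with $W\cap A/\Z P$ one-dimensional, and a dual argument settles $\dim W = 3$. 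The delicate case is $\dim W = 2$ with $W\cap A/\Z P = 0$, where the role of $[x_1,x_2] = z_3 \notin \langle z_1,z_2\rangle$ becomes essential: parameterising $W$ as the graph of a linear map $\langle \bar x_1,\bar x_2\rangle \to A/\Z P$, the $z_3$-components of the orthogonality conditions $\beta(\bar y,\bar w_k) = 0$ alone force the $\bar x_1$- and $\bar x_2$-coefficients of $\bar y$ to vanish, and the remaining equations then force the rest. Omitting this extra central commutator would leave a non-trivial family of isotropic $2$-dimensional complements to $A/\Z P$, contributing unwanted additional members to $\CD P$; including it yields precisely $\CD P = \{\Z P, A, P\}$.
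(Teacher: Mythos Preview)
Your proof is correct and follows essentially the same strategy as the paper: construct $P$ by taking a variant of the $p^6$ example of Corollary~\ref{l2odd} (an abelian $A_0 = \langle a_1,a_2,z_1,z_2\rangle$ acted on by $\langle x_1,x_2\rangle$ via an anisotropic $2\times 2$ system over $\ZZ_p$) and adjoin one further central generator recording $[x_1,x_2]$, so that $\Z P$ and $A$ each grow by a factor of $p$.  Your commutator scheme and quadratic form $Q$ differ cosmetically from the paper's, but the key mechanism---anisotropy of $Q$ forcing $C_A(x)=\Z P$ for $x\notin A$, together with the extra central commutator $[x_1,x_2]$ killing would-be abelian complements to $A$---is identical.

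The one genuine difference is in the verification that no other subgroup attains measure $p^{10}$.  The paper argues directly with centralizers: from $C_A(x)=\Z P$ it deduces $C_P(a)=A$ for $a\in A\setminus\Z P$ and $C_P(x)=\langle x\rangle\Z P$ for $x\notin A$, then handles subgroups $B$ by order, the only subtle case being $\ord B=p^6$, where $A\cap B\not\le\Z P$ forces $C_P(B)\le C_P(x)\cap C_P(a)=\Z P$.  You instead pass to the alternating map $\beta:V\times V\to\Z P$ and run a dimension count on $W$ and $W^\perp$, using the $z_3$-component to eliminate the $2$-dimensional complements.  Both arguments are short; yours is slightly more systematic (and makes transparent exactly why the extra generator $z_3$ is needed), while the paper's is a touch more hands-on.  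Either way the same two ingredients---anisotropy of $Q$ and the independent central commutator $[x_1,x_2]$---do all the work.
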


\begin{proof} The construction is a slight variation of the one given in the preceding Proposition~\ref{l2odd}. Choose $0 < c < p$ 
as in the proof of Proposition~\ref{l2odd} and let $P$ be generated by $x_1, x_2, a_1,a_2,z,z_1,z_2$ subject to the 
following relations:
\begin{equation*}
x_1^p = x_2^p = a_1^p = a_2^p = z^p = z_1^p = z_2^p = 1,
\end{equation*}
\begin{equation*}
[x_1,x_2] = z, [a_1,x_1] = z_1, [a_1,x_2] = z_1z_2^c, [a_2,x_1] = z_2, [a_2,x_2] = z_1,
\end{equation*}
\begin{center} all other commutators between the generators equal 1.\end{center}
It is clear that $A_0 = \langle a_1, a_2, z_1, z_2\rangle$ is an elementary abelian group on which the extraspecial 
group $E = \langle x_1,x_2,z\rangle$ of order $p^3$ and exponent $p$ for $p$ odd and dihedral for $p=2$ acts with 
kernel $\langle z\rangle$. Then $P = EA_0$ has order $p^7$, $A = \langle z\rangle A_0$ is an elementary abelian normal 
subgroup of order $p^5$ and $P' = \Z P = \langle z, z_1,z_2\rangle$ has order $p^3$.

Arguing in the same way as in the proof of Proposition~\ref{l2odd} we infer that $[A_0,x] = \langle z_1,z_2\rangle$ 
and hence $\ord{C_{A_0}(x)} = p^2$ for all $x \in P \setminus A$. Consequently, $C_A(x) = \Z P$ for all $x \in P \setminus A$.
This implies that $C_P(a) = A$ for all $a \in A \setminus \Z P$ and also that $C_P(x) = \langle x\rangle \Z P$ for all 
$x \in P \setminus A$.

From this it follows that $C_P(B) = A$ for all subgroups $B$ of $A$ properly containing $\Z P$. Moreover, if $B$ is a subgroup  
of order at most $p^5$ not contained in $A$, $x \in B \setminus A$, then $\ord{C_P(B)} \le \ord{C_P(x)} = p^4$. Finally, 
if $\ord B = p^6$ then, by order considerations, $A \cap B$ is not contained in $\Z P$; hence there exist 
$x \in B \setminus A$ and $a \in (A \cap B) \setminus \Z P$ whence $C_P(B) \le C_P(x) \cap C_P(a) = \Z P$.

It follows that $m_P(B) \le p^9 < p^{10} = m_P(\Z P) = m_P(A) = m_P(P)$ for all subgroups $B \ge \Z P$ different from $\Z P, A, P$. 
This completes the proof. 
\end{proof}

\section{An Extension Theorem and Chains of arbitrary Length}\label{extend}

Given the existence of $p$-groups with Chermak-Delgado lattices being chains of length 1 or 2 
as shown in the previous section, it is only natural to ask: Is there a way to extend these constructions, 
to obtain $p$-groups with chains of arbitrary length as Chermak-Delgado lattices?  
We prove not only that this extension is possible but, more generally:

\begin{thm}\label{extthm} Let $p$ be a prime and $H$ a $p$-group of class at most 2 such that $H \in \CD H$ and 
$H / \Z H$ is elementary abelian.  

There exists a $p$-group $G$ of class 2, an embedding of $H$ into $G$ (whose image will also be denoted by $H$) 
and a normal subgroup $N$ of $G$ with $N \cap H = 1$, $N\Z H > \Z G$ and $NH < G$ such that $G / \Z G$ is 
elementary abelian and $\CD {G} = \{G, \Z G\} \cup \{ N\tilde{H} \mid \tilde{H} \in \CD H \}$. 
Moreover, $\Z G$ is elementary abelian when $\Z H$ is elementary abelian. \end{thm}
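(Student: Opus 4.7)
My plan is to construct $G$ by enlarging $H$ with two new generators $x_1,x_2$ and a new central element $z$ subject to $[x_1,x_2]=z$, $z^p=x_1^p=x_2^p=1$, and $[h,x_i]=\phi_i(h)$ for $h\in H$, where $\phi_1,\phi_2\colon H\to\Z H$ are suitably chosen homomorphisms. Set $N:=\langle z\rangle$. Provided $\phi_i(H)\le\Z H$ and the two actions commute on $H$ (a condition I will arrange), this gives a well-defined $p$-group of class $2$ with $\ord G=p^3\ord H$ and $H\le G$. Since $z$ is central of order $p$, $N\nor G$, $N\cap H=1$, $NH$ has index $p^2$ in $G$, and $\Z G=(\ker\phi_1\cap\ker\phi_2\cap\Z H)\langle z\rangle$; set $Z^*:=\ker\phi_1\cap\ker\phi_2\cap\Z H$, a subgroup of $\Z H$ to be chosen of index $p^2$ (this index will be forced by the measure calculation). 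The strict containment $\Z G<N\Z H$ holds because $Z^*<\Z H$. Since $G/\Z G$ is generated by $H/Z^*$ together with $x_1,x_2$ modulo the center, it is elementary abelian provided $H/Z^*$ is, which follows from $[H,H]\cup H^p\subseteq Z^*$ (achievable because $H/\Z H$ being elementary abelian gives $H^p\le\Z H$). The \emph{moreover} clause is immediate since $\Z G=Z^*\langle z\rangle$ is elementary abelian whenever $\Z H$ is.

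The heart of the argument is the choice of $\phi_1,\phi_2$. A direct centralizer computation in the class-$2$ group $G$ shows that for $\tilde H\in\CD H$ one has $C_G(\tilde H)=\langle z\rangle C_H(\tilde H)$ precisely when, for every nonzero $(\alpha,\beta)\in\mathbb F_p^2$, the restriction of $\alpha\phi_1+\beta\phi_2$ to $\tilde H$ is not realized as an inner derivation $t\mapsto[t,h]$ for any $h\in H$. Following the anisotropic quadratic-form construction used in Proposition~\ref{l2odd} and its $p=2$-friendly adaptation in Proposition~\ref{l2n}, I would pick $\phi_i$ with images in a fixed two-dimensional $\mathbb F_p$-subspace of $\Z H$ complementary to $Z^*$, arranged so that the pencil $\{\alpha\phi_1+\beta\phi_2\}_{(\alpha,\beta)\ne(0,0)}$ consists of maps whose restrictions jointly avoid every inner derivation, for every $\tilde H\in\CD H$. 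Existence of such $\phi_i$ uses the elementary-abelian structure of $H/\Z H$ and the fact that $H\in\CD H$ tightly constrains the subgroups $\tilde H$ and their centralizers $C_H(\tilde H)$.

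Given such a choice, the measures fit together: for $\tilde H\in\CD H$ one gets $m_G(N\tilde H)=p\ord{\tilde H}\cdot p\ord{C_H(\tilde H)}=p^2m(H)=p^2\ord H\ord{\Z H}$, while $m_G(G)=\ord G\ord{\Z G}=p^4\ord H\ord{Z^*}$, and equality holds exactly when $[\Z H:Z^*]=p^2$---explaining the chosen index. So $G$, $\Z G$, and every $N\tilde H$ achieve the same measure, which is thus $m(G)$.

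The main obstacle is to rule out additional members of $\CD G$. For a subgroup $B$ with $\Z G<B<G$ contained in $NH$, I write $B=\langle z\rangle(B\cap H)$ so that $m_G(B)$ reduces to $p^2\cdot m_H(B\cap H)$, and $H\in\CD H$ forces equality only when $B\cap H\in\CD H$, i.e.\ $B=N\tilde H$ for some $\tilde H\in\CD H$. For $B$ meeting $G\setminus NH$, the anisotropy property of the pencil bounds $\ord{C_G(B)\cap H}$ strictly below $\ord{C_H(\tilde H)}$ for any relevant $\tilde H$, which drives $m_G(B)<m(G)$. Combining these two cases establishes $\CD G=\{G,\Z G\}\cup\{N\tilde H\mid\tilde H\in\CD H\}$.
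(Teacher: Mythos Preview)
Your construction has a structural obstruction that makes it fail for the very groups the theorem must apply to. Since the new commutators $[h,x_i]=\phi_i(h)$ land in $\Z H$, for $G$ to have class~$2$ you need $G'=H'\,\phi_1(H)\,\phi_2(H)\,\langle z\rangle\le\Z G=Z^*\langle z\rangle$, hence $H'\le Z^*\le\ker\phi_1\cap\ker\phi_2$. On the other hand, your centralizer requirement for the bottom member $\tilde H=\Z H\in\CD H$ says that $(\alpha\phi_1+\beta\phi_2)|_{\Z H}$ is not an inner derivation for any nonzero $(\alpha,\beta)$; but every inner derivation vanishes on $\Z H$, so this forces $\phi_1|_{\Z H},\phi_2|_{\Z H}$ to be $\mathbb F_p$-linearly independent, and in particular nonzero. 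These two demands are incompatible whenever $H'=\Z H$. More quantitatively, your measure computation pins down $[\Z H:Z^*]=p^2$, which forces $[\Z H:H'H^p]\ge p^2$, a hypothesis the theorem does not assume. The seed group of Proposition~\ref{l1n}, from which the final corollary iterates, satisfies $H'=\Z H$, so your $G$ cannot be built from it. Even when $[\Z H:H'H^p]\ge p^2$, the existence of $\phi_1,\phi_2$ meeting your anisotropy condition \emph{simultaneously for every} $\tilde H\in\CD H$ is merely asserted; the analogy with Propositions~\ref{l2odd} and~\ref{l2n} does not transfer automatically, since there the pencil lives on a fixed two-dimensional target, whereas here the spaces $\{\,t\mapsto[t,h]\,\}|_{\tilde H}$ of inner derivations vary with $\tilde H$.

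The paper sidesteps both problems by enlarging $H$ externally rather than pushing the new commutators into $\Z H$: it sets $G=(H\times E)\rtimes P$, where $E\cong(\ZZ/p\ZZ)^r$ with $r=\mathrm{rank}(H/\Z H)$ and $P$ is the explicit order-$p^7$ group of Proposition~\ref{l2n} with $\CD P=\{\Z P,A,P\}$. Only $x_1$ acts nontrivially, via $v_i\mapsto v_ie_i$, so $[H,x_1]\le E$ lands in a fresh central factor and no constraint on $[\Z H:H']$ arises. Here $N=AE$ (not cyclic), $\Z G=\Z H\times E\times\Z P$, and the centralizer of any $U\in\CD G$ factors as $C_H(H_0)\bigl(C_P(H_0)\cap C_P(P_0)\bigr)E$, which reduces the determination of $\CD G$ to the already-computed $\CD P$ rather than to an ad~hoc anisotropy argument.
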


Note that because of $N \cap H = 1$ in the theorem, $ \{ N\tilde{H} \mid \tilde{H} \in \CD H \}$ is isomorphic as 
a lattice to $\CD H$ and since $N\Z H > \Z G$ and $NH < G$, the Chermak-Delgado lattice of $G$ extends $\CD H$ both upwards and downwards by one more subgroup.\\

The construction of the group $G$ depends essentially on the construction of the group $P$ with $\CD P$ a chain 
of length 2 presented in the proof of Proposition~\ref{l2n} in the previous section. We recall that $P$ is 
generated by $x_1,x_2,a_1,a_2,z,z_1,z_2$, all of order $p$, where the non-trivial commutator relations are 
\begin{equation*}
[x_1,x_2] = z, [a_1,x_1] = z_1, [a_1,x_2] = z_1z_2^c, [a_2,x_1] = z_2, [a_2,x_2] = z_1,
\end{equation*}
$c$ suitably chosen. Recall also that $\Z P = \langle z,z_1,z_2 \rangle$, $A = \langle a_1,a_2 \rangle \Z P$ is an 
abelian normal subgroup of $P$, $C_P(x) = \langle x \rangle \Z P$ for all $x \in P \setminus A$ and 
$\CD P = \{\Z P, A, P\}$.We use these facts and the notation in the following construction and also in the proof of the extension theorem.\\

\begin{cnstr} Let $E = \langle e_1,\dots ,e_r \rangle$ be an elementary abelian $p$-group of the same rank, $r$, 
as $H / \Z H$.  Let $v_1,\dots ,v_r$ be such that $\{v_i\Z H \mid 1 \le i \le r \}$ is a basis for $H / \Z H$.  
The group $P = \langle x_1,x_2 \rangle A$ acts on $H \times E$ with $\langle x_2 \rangle A$ in the kernel,  where $x_1$ 
induces a central automorphism through $v_i^{x_1} = v_ie_i$, for $1 \le i \le r$, that centralizes $\Z H \times E$. 
With respect to this action we define $G = (H \times E) \rtimes P$.

Note that $x_1$ induces an automorphism of order $p$ on $H \times E$ unless $H$ is abelian in which case 
$E = 1$ and $G = H \times P$. In this situation the statement of the theorem is trivially true (with this $G$ and $N = A$) 
by \cite[Theorem 2.9]{BW2012}. We therefore assume in the remainder of the section that $H$ is non-abelian. \end{cnstr}

For the proof of the theorem it is helpful to collect some information about centralizers in $G$:

\begin{lem}\label{Gcentralizers} Let $G = (H \times E) \rtimes P$ be as described in the construction above with $H$ non-abelian. 
Let $h \in H$ and $x \in P$. 
	\begin{enumerate}
  \item If $H_0 \subseteq H$ then $C_G(H_0) = C_H(H_0)C_P(H_0)E$;
  
  \noindent if $P_0 \subseteq P$ then $C_G(P_0) = C_H(P_0)C_P(P_0)E$.
  \item $\Z G = \Z H \times E \times \Z P$; $G/ \Z G$ is elementary abelian.
	\item If $x \in P \setminus C_P(H)$ then $C_H(x) = C_H(P) = \Z H$.
	\item If $h \in H \setminus \Z H$ then $C_P(h) = C_P(H) = \langle x_2 \rangle A$.
	\item If $h \in H$ and $x \in C_P(H)$ then $C_G(hx) = C_G(h) \cap C_G(x)$.  
  \item If $h \in H$ and $x \in P \setminus C_P(H)$ then $C_G(hx) = \langle hx \rangle \Z G$.
  \end{enumerate}
\end{lem}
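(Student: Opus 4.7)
My plan exploits the uniqueness of the normal form $g = h'e'x'$ with $h' \in H$, $e' \in E$, $x' \in P$ in the semidirect product $G = (H \times E)\rtimes P$, together with the observation that $E \le \Z G$ (since $E$ is central in $H\times E$ and fixed pointwise by $P$ by construction). A second ingredient, used throughout, is the explicit formula for the action of $x = x_1^jb$ (with $b \in \langle x_2\rangle A$) on $h = v_1^{k_1}\cdots v_r^{k_r}z \in H$ ($z \in \Z H$): one gets ${}^x h \cdot h^{-1} = e_1^{jk_1}\cdots e_r^{jk_r} \in E$. In particular this element is always in $E$, and it is trivial iff $j \equiv 0\pmod p$ or every $k_i \equiv 0\pmod p$. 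Repeatedly, the disjointness $H\cap E = 1$ lets one split identities in $H\times E$ into separate conditions on $H$- and $E$-parts.

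I would tackle the parts in the order (1), (3), (4), (2), (5), (6). For (1), writing $gh_0 = h_0g$ in normal form for $h_0 \in H_0$ yields the single $(H\times E)$-equation $h'\cdot{}^{x'}h_0 = h_0\cdot h'$; substituting ${}^{x'}h_0 = h_0\delta$ with $\delta \in E$ splits this into $[h',h_0] = 1$ (the $H$-part, giving $h' \in C_H(h_0)$) and $\delta = 1$ (the $E$-part, giving $x' \in C_P(h_0)$), while $e'$ is unconstrained. Intersecting over $H_0$ gives the first assertion, and the dual for $P_0 \subseteq P$ is analogous. Part (3) is the observation that for $x \in P \setminus C_P(H)$ the $x_1$-exponent $j$ is non-zero mod $p$, so ${}^xh = h$ forces every $k_i \equiv 0\pmod p$ and hence $h \in \Z H$; (4) is the contrapositive combined with $\Z H \le C_H(P)$ from the construction. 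For (2), using (1) with $H_0 = H, P_0 = P$ and invoking (3), (4) identifies $\Z G = C_G(H) \cap C_G(P)$ as $\Z H \cdot E \cdot \Z P$, a direct product since the three factors intersect pairwise trivially. To see $G/\Z G$ elementary abelian I would first verify $[G,G] \le \Z G$ from $[H,H] \le \Z H$, $[P,P] \le \Z P$, and $[H,P] \le E$ (the last from the action formula), and then combine $H/\Z H$ and $P/\Z P$ elementary abelian with the class-$2$ identity $(gh)^p = g^ph^p[h,g]^{\binom{p}{2}}$ to conclude $g^p \in \Z G$ on generators.

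The last two parts use the same normal-form separation after absorbing $e' \in \Z G$ into $g = h'x'$. For (5), the hypothesis $x \in C_P(H)$ makes $x$ commute with $h' \in H$, so $g^{-1}hg \in H\times E$ and $g^{-1}xg \in P$ lie in complementary factors; uniqueness of the normal form of $hx = g^{-1}(hx)g$ then forces both $g^{-1}hg = h$ and $g^{-1}xg = x$. For (6), the equation $g(hx) = (hx)g$ separates into $x'x = xx'$ (so $x' \in C_P(x)$) and $h'\cdot{}^{x'}h = h\cdot{}^xh'$ in $H\times E$; substituting ${}^{x'}h = h\delta'$ and ${}^xh' = h'\delta$ with $\delta,\delta' \in E$, the $H$- and $E$-components split into $h' \in C_H(h)$ and the congruences $jl_i \equiv j'k_i\pmod p$ for all $i$ (with $x = x_1^jb$, $x' = x_1^{j'}b'$, $h = \prod v_i^{k_i}z$, $h' = \prod v_i^{l_i}z'$). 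Because $j \not\equiv 0$, setting $s := j'j^{-1}$ gives $h' \equiv h^s\pmod{\Z H}$ and $x' \equiv x^s\pmod{\langle x_2\rangle A}$. Invoking $C_P(x) = \langle x\rangle\Z P$ from the proof of Proposition~\ref{l2n} (applicable since $x \in P\setminus A$) together with $x^p \in \Z P$ upgrades this to $x' \in x^s\Z P$. Finally the class-$2$ identity $(hx)^s = h^sx^s\zeta$ with $\zeta \in [G,G] \le \Z G$ exhibits $g$ as $(hx)^s$ times an element of $\Z G$, so $g \in \langle hx\rangle\Z G$.

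The main obstacle I anticipate is the last step of (6): passing from $x' \equiv x^s\pmod{\langle x_2\rangle A}$, which is all the semidirect-product equation delivers, to the sharper $x' \in x^s\Z P$. This is precisely where the fine structure of $P$ from Proposition~\ref{l2n} — the description of $C_P(x)$ for $x \in P\setminus A$ and the relation $x^p \in \Z P$ — must be brought to bear. The remaining parts are essentially bookkeeping inside the semidirect-product normal form.
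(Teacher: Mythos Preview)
Your argument is correct and tracks the paper's proof closely in substance; the difference is one of packaging. The paper compresses your normal-form bookkeeping into a single commutator identity: since $G$ has class $2$ and $H' \cap [H,P] = 1 = P \cap (H\times E)$, one has $[hx,h'x'] = 1$ if and only if $[h,h'] = [x,x'] = 1$ and $[h,x'] = [h',x]$. From this, parts (1), (2) and (5) fall out in one line, and (6) is handled by first using $[x,x']=1$ to write $x' = x^{\alpha}y$ with $y \in \Z P$ (via $C_P(x) = \langle x\rangle\Z P$ from Proposition~\ref{l2n}) and then reading off $h' \in h^{\alpha}\Z H$ directly from the condition $[h,x'] = [h',x]$ computed in the $e_i$. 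Your route---comparing $H$-, $E$-, and $P$-components of conjugates---is the same computation unpacked, and the obstacle you anticipated in (6) is precisely the point where both arguments must invoke $C_P(x) = \langle x\rangle \Z P$. One simplification worth noting: for the claim that $G/\Z G$ is elementary abelian you do not need the $p$-th power formula on generators. Once $\Z H \times E \times \Z P \le \Z G$ is observed, $G/\Z G$ is a quotient of $G/(\Z H \times E \times \Z P) \cong H/\Z H \times P/\Z P$ (the action of $P$ on $H$ becomes trivial modulo $E$), and the latter is elementary abelian by hypothesis.
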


\begin{proof} Clearly $\Z H \times E \times \Z P \le \Z G$ and hence $G/ \Z G$ is elementary abelian. Let $h' \in H$ and $x' \in P$. The fact that $G$ has class 2 and 
$P \cap (H \times E) = 1 = H' \cap [H,P]$ implies
\begin{equation}[hx,h'x'] = 1 \quad \textrm{if and only if} \quad [h,h'] = [x,x'] = 1 
\;\, \textrm{and} \;\, [h,x'] = [h',x] \tag{*}.\end{equation} 
From this, parts (1) and (2) follow immediately. Parts (3) and (4) are consequences of the construction of $G$. Let $x \in C_P(H)$. Then $[h',x] = 1$ and by (*), $h'x' \in C_G(hx)$ if and only if 
$h'x' \in C_G(h) \cap C_G(x)$. Since $E \le \Z G$, part (5) follows. 

Assume now that $x \in P \setminus C_P(H)$. Let $h'x' \in C_G(hx)$. By (*), 
$x' \in C_P(x) = \langle x \rangle \Z P$ as $x \not\in C_P(H) \ge A$. Hence $x' = x^{\alpha}y$ for some integer $\alpha$ with 
$0 \le \alpha \le p-1$ 
and some $y \in \Z P$. Also $x = x_1^{\beta}t$ for some integer $\beta$ with $1 \le \beta \le p-1$ and some $t \in C_P(H)$. 
Hence $x' = x_1^{\alpha \beta}t^{\alpha}y'$ for some $y' \in \Z P$.
If $h = v_1^{\alpha_1}\cdots v_r^{\alpha_r} w$ and $h' = v_1^{\beta_1}\cdots v_r^{\beta_r} w'$ with 
integers $0 \le \alpha_i, \beta_i \le p-1$, $i = 1,\ldots,r$ and $w, w' \in \Z H$ then $[h,x'] = e_1^{\alpha_1 \alpha \beta} \cdots e_r^{\alpha_r \alpha \beta}$ and 
$[h',x] = e_1^{\beta_1 \beta} \cdots e_r^{\beta_r \beta}$. By (*), $[h,x'] = [h',x]$, whence 
$\alpha_i \alpha = \beta_i$, $i = 1,\ldots,r$ as $\beta \mod p \ne 0$. 
Therefore $h' = h^{\alpha}\tilde{w}$ for some $\tilde{w} \in \Z H$. It follows that 
$h'x' \Z G = (hx)^{\alpha} \Z G$. This yields (6). 
\end{proof}

\noindent
{\bf Proof of the Theorem.} Note that $m_G(G) = m_G(\Z G) = m(H)m(P)m(E) = \ord H \ord{\Z H}\ord P \ord{\Z P}\ord E^2$. Let $U \in \CD G$ with $\Z G < U < G$. We show that $m_G(U) = m_G(G)$ and that $U$ is of the form given 
in the statement of the theorem. 

Set $H_0 = \{h \in H \mid hx \in U \textrm{ for some } x \in P\}$. Since $[H,P] \le E \le U$ and $H \cap E = 1$,the set $H_0$ 
is a subgroup of $H$. Let $P_0$ be the image of $U$ under the projection of $G$ onto $P$. Clearly $H_0P_0E$ is a subgroup 
of $G$ containing $U$. Note that 
\begin{equation*}
\ord U = \ord{H \cap U} \ord{P_0} \ord E,
\end{equation*} 
since $(H \cap U)E$ is the kernel of the projection 
restricted to $U$ onto $P_0$. We show now that $P_0 \le C_P(H)$.  

Suppose, to the contrary, that $U$ contains $hx$ where $h \in H$ and $x \in P \setminus C_P(H)$.
Lemma~\ref{Gcentralizers} (6) gives $C_G(U) \leq C_G(hx) = \langle hx \rangle \Z G$. Assume there 
exists $h' \in U$ with $h' \in H \setminus \Z H$.  Since $x \not\in C_P(H) = C_P(h')$ (by Lemma~\ref{Gcentralizers} (4)),  
it follows that $[h',x] \ne 1$ and thus $[hx,h'] \ne 1$.  This implies $C_G(U) < \langle hx \rangle \Z G$, whence 
$C_G(U) = \Z G$.  Therefore $m_G(U) < m_G(G)$, as $U < G$, and so $U \not\in \CD G$, contrary to the hypothesis.

We may therefore assume that $H \cap U = \Z H$.  This and the structure of $C_G(U)$ yield
\begin{equation*}
\ord U \le \ord {\Z H} \ord P \ord E \quad \textrm{and} \quad \ord{C_G(U)} \le p\ \ord{\Z G} = p \ord{\Z H} \ord{\Z P} \ord E.
\end{equation*}
Therefore $m_G(U) \le p \ord {\Z H}^2m(P)m(E)$.  The only way that $m_G(U) \ge m_G(G)$ is if 
$p \ord {\Z H} \ge \ord H$, yet by assumption $H$ is non-abelian.  Hence, contrary to the hypothesis, $m_G(U) < m_G(G)$. Thus $P_0 \le C_P(H)$; the next step is to show that $U = H_0 P_0 E$.

From Lemma~\ref{Gcentralizers} (5) we infer
\begin{equation*}
\begin{aligned}
C_G(U) =& \bigcap\limits_{hx \in U} C_G(hx) = \bigcap\limits_{hx \in U} \left(C_G(h) \cap C_G(x)\right)\\
 			   =& \bigcap\limits_{h \in H_0} C_G(h)\ \cap\ \bigcap\limits_{x \in P_0} C_G(x) = C_G(H_0) \cap C_G(P_0)\\
 				   =&\ C_G(H_0P_0E).
\end{aligned}
\end{equation*}
Therefore $m_G(U) \le m_G(H_0P_0E)$, with equality if and only if $U = H_0P_0E$.  Moreover, 
\begin{equation*}
\begin{aligned}
C_G(U) =& C_G(H_0) \cap C_G(P_0)\\
       =& C_H(H_0)C_P(H_0)E \cap C_H(P_0)C_P(P_0)E\\
       =& (C_H(H_0) \cap C_H(P_0))(C_P(H_0) \cap C_P(P_0))E\\
       =& C_H(H_0)(C_P(H_0) \cap C_P(P_0))E.
\end{aligned}
\end{equation*}
Here we have used Lemma~\ref{Gcentralizers} (1) and $P_0 \le C_P(H)$.

Consequently
\begin{equation*}
m_G(U) = m_H(H_0) \ord {P_0} \ord {C_P(H_0) \cap C_P(P_0)}m(E),
\end{equation*}
yielding $m_G(U) \le m_G(G)$ with equality if and only if $H_0 \in \CD H$ and $m(P) = \ord {P_0} \ord {C_P(H_0) \cap C_P(P_0)}$.  The latter condition is equivalent to $P_0 \in \CD P = \{ \Z P, A, P\}$ and $C_P(P_0) \le C_P(H_0)$. Since $P_0$ 
centralizes $H$, it follows that $P_0 \ne P$. If $P_0 = \Z P$ then $C_P(P_0) \le C_P(H_0)$ implies $H_0 \le C_H(P) = \Z H$ whence 
$U = \Z G$. Thus we conclude that if $U \in \CD G$ with $\Z G < U < G$ then $U = NH_0$ where $H_0 \in \CD H$ and $N = AE \nor G$.  Furthermore, 
$m_G(U) = m_G(G)$.  Since $G, \Z G \in \CD G$ as well, the Chermak-Delgado lattice of $G$ is as claimed. \qedhere

\begin{cor} For any non-negative integer $n$ and any prime $p$ there exists a $p$-group $P$ such that 
$P \in \CD P$ and where $\CD P$ is a chain of length $n$. The group $P$ can be chosen in such a way that both $P/ \Z P$ and 
$\Z P$ are elementary abelian. \end{cor}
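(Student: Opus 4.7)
The plan is to proceed by induction on $n$, using Propositions~\ref{l1n} and \ref{l2n} as base cases and Theorem~\ref{extthm} as the inductive engine. For $n = 0$ I would take $P$ to be any nontrivial elementary abelian $p$-group, whose Chermak-Delgado lattice is just $\{P\}$; for $n = 1$ and $n = 2$ I would take the groups constructed in Propositions~\ref{l1n} and \ref{l2n} respectively. The key point is that in each of these three base cases the witness $P$ simultaneously satisfies $P \in \CD P$, $P/\Z P$ elementary abelian, and $\Z P$ elementary abelian, which are exactly the hypotheses required to feed the witness back into Theorem~\ref{extthm}.

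For the inductive step, suppose $H$ is a witness at length $k$ satisfying the three invariants above, so $\CD H$ is a chain $H_0 < H_1 < \cdots < H_k$. Applying Theorem~\ref{extthm} produces a $p$-group $G$ of class $2$ and a normal subgroup $N$ with $N \cap H = 1$, $\Z G < N\Z H$, $NH < G$, $G/\Z G$ elementary abelian, and
\[
\CD G = \{\Z G,\, G\} \cup \{ N\tilde H \mid \tilde H \in \CD H \}.
\]
Because $N \cap H = 1$, the assignment $\tilde H \mapsto N\tilde H$ is an injective lattice embedding, so the middle part of $\CD G$ is a chain $NH_0 < NH_1 < \cdots < NH_k$. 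Using $\Z G < N\Z H \le NH_0$ (where the second inequality is $\Z H \le H_0$ from Proposition~\ref{cdomnibus}(4)) together with $NH_k = NH < G$, I conclude that $\CD G$ is a chain of length $k+2$. The theorem also guarantees $G \in \CD G$, that $G/\Z G$ is elementary abelian, and that $\Z G$ is elementary abelian whenever $\Z H$ is, so $G$ is a valid witness at length $k+2$ and the induction may be iterated.

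Each application of the theorem increases the chain length by exactly $2$, so starting from $n = 1$ reaches every odd $n \ge 1$ and starting from $n = 2$ reaches every even $n \ge 2$; together with the separate case $n = 0$ this covers every non-negative integer. I do not expect any real obstacle here: the technical work has already been done in constructing the small base cases of orders $p^6$ and $p^7$ with the necessary elementary abelian structure, and in the proof of the extension theorem. The only thing to verify carefully in the induction is bookkeeping, namely that the three hypotheses of Theorem~\ref{extthm} are preserved at each step, which is immediate from its conclusions.
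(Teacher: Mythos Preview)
Your argument is correct and follows the same inductive scheme as the paper: base cases with the required elementary abelian structure, then repeated application of Theorem~\ref{extthm}, each step increasing the chain length by~$2$. The only difference is that the paper uses just the two base cases $n=0$ (a cyclic group of order $p$) and $n=1$ (Proposition~\ref{l1n}), which already cover all parities; your separate base case $n=2$ via Proposition~\ref{l2n} is correct but redundant, since applying the theorem once to the $n=0$ witness already yields $n=2$.
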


\begin{proof} For a chain of length 0 we choose a cyclic group of order $p$. For chain length 1 we choose 
the $p$-group from Proposition~\ref{l1n}. Starting with these groups as $H$ and applying the theorem 
iteratively, using the group constructed in the last step as new group $H$ for the next step, yields the assertion.   
\end{proof}

\bibliographystyle{amsplain}
\bibliography{references}

\end{document}